\theoremstyle{definition}
\newtheorem{definition}{Definition}[section]
\newtheorem{example}[definition]{Example}
\newtheorem{remark}[definition]{Remark}
\theoremstyle{plain}
\newtheorem{lemma}[definition]{Lemma}
\newtheorem{proposition}[definition]{Proposition}
\newtheorem{theorem}[definition]{Theorem}
\newtheorem{corollary}[definition]{Corollary}
\DeclareMathOperator{\Sing}{Sing}
\begin{document}
\title{\textbf{Singular hypersurfaces possessing infinitely many star points}}
\author{Filip Cools\footnote{K.U.Leuven, Department of Mathematics,
Celestijnenlaan 200B, B-3001 Leuven, Belgium, email:
Filip.Cools@wis.kuleuven.be} \hspace{0.2mm} and \addtocounter{footnote}{5}Marc Coppens\footnote{Katholieke
Hogeschool Kempen, Departement Industrieel Ingenieur en Biotechniek,
Kleinhoefstraat 4, B-2440 Geel, Belgium; K.U.Leuven, Department of Mathematics,
Celestijnenlaan 200B, B-3001 Leuven, Belgium; email: Marc.Coppens@khk.be}}

\maketitle {\footnotesize \emph{\textbf{Abstract.---} We prove that a component $\Lambda$ of the closure of the set of star points on a hypersurface of degree $d\geq 3$ in $\mathbb{P}^N$ is linear. Afterwards, we focus on the case where $\Lambda$ is of maximal dimension and the case where $X$ is a surface.\\ \\
\indent \textbf{MSC.---} 14J70, 14N15, 14N20}}
\\ ${}$

\section{Introduction}

Consider a hypersurface $X$ of degree $d\geq 3$ in a projective space $\mathbb{P}^N$ defined over the field $\mathbb{C}$ of complex numbers. A smooth point on $X$ is called a star point if and only if the intersection of $X$ with the embedded tangent space $T_P(X)$ is a cone with vertex $P$. As explained in \cite{CoCo}, this notion is a generalisation of total inflection points on plane curves. It is also a generalisation of the classical notion of an Eckardt point on a smooth cubic surface in $\mathbb{P}^3$ (see \cite{Eck,Ngu1,Ngu2}). Star points on smooth hypersurfaces have been studied in \cite{CoCo}, where it is proven that such hypersurfaces contain only finitely many star points. Star points on singular cubic surfaces in $\mathbb{P}^3$ have been examined in \cite{Cia,Ngu2,Ngu3}. In particular, in \cite{Cia} the author investigates a singular cubic surface containing infinitely many star points and makes some general thoughts on it.

In Section \ref{sec main thm} of this article, we prove that each component $\Lambda$ of the closure of the set of star points of $X$ is a linear subspace of dimension $0\leq \lambda\leq N-2$. Moreover, each smooth point $P$ in $\Lambda$ has the same tangent space $\Pi$ and $\Pi\cap X$ is a cone with vertex $\Lambda$. Note that for smooth hypersurfaces, we always have $\lambda=0$. In Section \ref{sec lambda maximal}, we study the case where $\Lambda$ is of maximal dimension, i.e. $\lambda=N-2$. We prove that either all such $(N-2)$-dimensional linear spaces $\Lambda$ belong to the same hyperplane in $\mathbb{P}^N$ or they contain a common $(N-3)$-dimensional linear space. In the first case, there are at most $d$ such linear spaces $\Lambda$; in the second case, there are at most $3d$ such linear spaces $\Lambda$. To finish the article, in Section \ref{sec surface} we consider surfaces in $\mathbb{P}^3$ having a line $L$ of star points. We show that in general such line contains $d-1$ singularities of $X$ of type $A_{d-1}$. Conversely, the presence of such singularities implies that $L$ is a line of star points. We also make some remarks in more special cases. On the one hand, our results give contradictions to the thoughts in \cite{Cia}; on the other hand, we place them in a much more general situation. Finally, we show that star points on a line of star points can be considered as a limiting case of a special type of isolated star points. This generalizes results of \cite{Ngu4} for cubic surfaces.

\section{Main theorem} \label{sec main thm}

We work over the field $\mathbb{C}$ of complex numbers.

\begin{definition}
Let $X$ be an irreducible reduced hypersurface of degree $d\geq 3$
in $\mathbb{P}^N$ and let $P$ be a smooth point on $X$. We say that $P$ is a {\it star point} on $X$ if and only if $T_P(X)\cap X$ is a cone with vertex $P$.
\end{definition}

\begin{example} \label{ex general} Assume that $X$ is an irreducible reduced hypersurface of
degree $d\geq 3$ in $\mathbb{P}^N$ and $\Pi$ is an hyperplane in
$\mathbb{P}^N$ such that the scheme $\Pi\cap X$ is a cone with
vertex $\Lambda$ (here $\Lambda$ is a linear subspace of $\Pi$). In
case $\Lambda$ is not contained in $\Sing (X)$, then all points of
$(X\setminus \Sing (X))\cap \Lambda$ are star points of $X$. In
particular, $\Lambda$ is contained in the closure of the set of star
points on $X$. In case $P\in \Lambda$ is a smooth point of $X$, then
$T_P(X)=\Pi$, hence all those star points have the same tangent space
to $X$.
\end{example}

\begin{example} \label{ex cone} Let $V$ be a plane in $\mathbb{P}^N$ and let $\Gamma$
be an irreducible plane curve of degree $d$ in $V$. Consider an $(N-3)$-dimensional linear subspace $L$ of $\mathbb{P}^N$ such that
$L \cap V =\emptyset$ and let $X$ be the cone on $\Gamma$ with
vertex $L$. If $Q$ is a total inflection point of $\Gamma$
with tangent line $T$, then each point $P\in \langle Q, L
\rangle \setminus L$ is a star point of $X$ and
$T_P(X)=\langle T, L \rangle$. In particular, $\langle Q,
L \rangle$ is contained in the closure of the locus of star
points on $X$ and all those star points have the same tangent space.
\end{example}

\begin{remark}
It should be noted that Example \ref{ex cone} is a special case of Example \ref{ex general}.
Here $\Pi=\langle T, L\rangle$ and $\Pi\cap X$ is as a divisor on $\Pi$ equal to
$d\Lambda$ with $\Lambda=\langle Q, L \rangle$, hence it is a
cone with vertex $\Lambda$.
\end{remark}

The following theorem implies that Example \ref{ex general} is the typical example.

\begin{theorem} \label{thm main}
Let $X$ be an irreducible reduced hypersurface of degree
$d$ in $\mathbb{P}^N$ and let $\Lambda$ be a component of the closure of
the locus of star points on $X$. Then $\Lambda$ is a linear subspace of
$\mathbb{P}^N$ of some dimension $0\leq \lambda\leq N-2$ and there is a
hyperplane $\Pi$ in $\mathbb{P}^N$ containing $\Lambda$ such that $\Pi\cap X$
is a cone in $\Pi$ with vertex $\Lambda$. In particular, $\Pi$ is the tangent space to $X$ at all smooth points of $X$ contained in $\Lambda$. Moreover, in case $\dim(\Lambda)=N-2$ then $\Pi\cap X$ is equal to $d\Lambda$ as a divisor on $\Pi$.
\end{theorem}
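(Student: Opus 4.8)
The plan is to work first at a single star point to extract a local normal form, and then to propagate that information along $\Lambda$. I would fix a smooth star point $P$ of $X$, choose coordinates so that $P=[1:0:\cdots:0]$ and $T_P(X)=\{x_N=0\}$, and write $F=\sum_{k=0}^{d}x_0^{d-k}f_k(x_1,\dots,x_N)$ with $f_k$ homogeneous of degree $k$. Since $P\in X$ we get $f_0=0$, and the coordinate choice gives $f_1=x_N$, so $T_P(X)\cap X$ is cut out inside $\{x_N=0\}$ by $\sum_{k\ge 2}x_0^{d-k}f_k(x_1,\dots,x_{N-1},0)$. I would then show that this intersection is a cone with vertex $P$ precisely when $f_k(x_1,\dots,x_{N-1},0)=0$ for $2\le k\le d-1$, i.e. $x_N\mid f_k$ for those $k$. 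The consequence I want to read off is the $k=2$ case: the degree-two part of $F$ restricted to the tangent hyperplane vanishes, so the second fundamental form $II_P$ of $X$ at $P$ is zero at every star point.

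The key step is to upgrade this pointwise vanishing to rigidity of the tangent hyperplane along $\Lambda$. Since the star points are dense in $\Lambda$ and lie in the smooth locus of $X$, the Gauss map $\gamma\colon P\mapsto T_P(X)$ is a morphism on a dense open $U\subseteq\Lambda$, and (up to the standard identification) its differential is the second fundamental form; by the previous paragraph $d\gamma$ vanishes on $U$. As $\Lambda$ is irreducible and we work in characteristic $0$, a morphism with everywhere-vanishing differential is constant, so $\gamma|_U$ is constant and there is a single hyperplane $\Pi$ with $T_P(X)=\Pi$ for all smooth $P\in\Lambda$. Because every point lies in its own tangent space, $P\in T_P(X)=\Pi$ for each such $P$, and passing to closures gives $\Lambda\subseteq\Pi$. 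This also settles the claim that $\Pi$ is the common tangent space along $\Lambda$.

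Next I would identify $\Lambda$ with the vertex of the section. Set $Y=\Pi\cap X$, a degree-$d$ hypersurface in $\Pi\cong\mathbb{P}^{N-1}$, proper since $X$ is irreducible of degree $d\ge 3$ and hence contains no hyperplane. For each star point $P\in\Lambda$ the defining property says $Y=T_P(X)\cap X$ is a cone with vertex $P$, so $\Lambda$ is contained in the vertex locus $\mathrm{Vert}(Y)$, which is a linear subspace of $\Pi$. Conversely, Example~\ref{ex general} applied to $\Pi$ and the linear vertex $\mathrm{Vert}(Y)$, which is not contained in $\Sing(X)$ because it meets $\Lambda$, shows that $\mathrm{Vert}(Y)$ lies in the closure of the star locus. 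Since $\Lambda$ is a component of that closure and $\mathrm{Vert}(Y)$ is irreducible, maximality forces $\Lambda=\mathrm{Vert}(Y)$. Thus $\Lambda$ is linear, $Y=\Pi\cap X$ is a cone with vertex $\Lambda$, and $\dim\Lambda\le N-2$, since a proper hypersurface cone in $\mathbb{P}^{N-1}$ cannot have a vertex of dimension $N-1$ (that would force $Y=\emptyset$ or $Y=\Pi$).

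Finally, in the maximal case $\lambda=N-2$ the vertex $\Lambda$ is a hyperplane of $\Pi$, say $\Lambda=\{u=0\}$ for a linear form $u$ on $\Pi$; a cone with vertex $\{u=0\}$ is cut out by a polynomial in $u$ alone, and the only homogeneous degree-$d$ form in $u$ is $u^d$ up to scalar, so $\Pi\cap X=d\Lambda$ as a divisor. The step I expect to be the main obstacle is the rigidity argument: cleanly deriving $II_P=0$ from the normal form and justifying that the vanishing of $d\gamma$ forces $\gamma$ to be constant on the only-generically-smooth component $\Lambda$. Should the Gauss-map formalism prove delicate for singular $X$, I would instead verify directly that the full Taylor expansion of $F$ at $P$ has all parts of degree $2,\dots,d-1$ divisible by $x_N$, which makes $\Lambda$ osculate $\Pi$ to order $d-1$ and thereby forces $\Lambda\subseteq\Pi$ by an elementary order-of-vanishing argument.
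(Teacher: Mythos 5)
Your proposal is correct, but it takes a genuinely different and much shorter route than the paper. The paper's proof works with two general points $P,P'\in\Lambda$ and splits into the dichotomy $T_P(X)=T_{P'}(X)$ versus $T_P(X)\neq T_{P'}(X)$: the first case is handled quickly by a projection argument, and the bulk of the proof is an intricate exclusion of the second case via moving/non-moving components of $T_P(X)\cap X$, invoking \cite[Theorem 4.2]{CoCo} to produce singular points on lines through star points, concluding that a moving non-cone component is a multiple component so that the Gauss map has $(N-2)$-dimensional fibers, applying Zak's theorem \cite[Theorem 2.3(c)]{Zak} to make that component linear, and using Kleiman's version of Bertini \cite[Theorem 4.1]{Kle} plus a cone-over-a-plane-curve analysis to finish. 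Your key observation --- that the cone condition in the normal form forces $f_k(x_1,\dots,x_{N-1},0)=0$ for $2\le k\le d-1$, so in particular the second fundamental form $II_P$ vanishes at every star point --- collapses this dichotomy in one stroke: since $d\gamma_P$ is identified with $II_P$, the differential of the Gauss map vanishes at star points, in particular along $T_P\Lambda\subseteq T_P(X)$, and in characteristic $0$ lower semicontinuity of the rank plus generic smoothness force $\gamma$ to be constant on the irreducible $\Lambda$; this lands you directly in the paper's easy case. Your endgame also differs: instead of propagating vertices by projection from $P$, you use that the vertex locus of the section $Y=\Pi\cap X$ is a closed linear subspace (indeed $v\in\mathrm{Vert}(Y)$ iff $D_vG\equiv 0$, a linear condition on $v$ in characteristic $0$) and obtain $\Lambda=\mathrm{Vert}(Y)$ from component-maximality via Example \ref{ex general}, after which $\lambda\le N-2$ and the divisor statement $\Pi\cap X=d\Lambda$ fall out exactly as you say. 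What each approach buys: yours is shorter, avoids \cite{CoCo}, \cite{Zak} and \cite{Kle} entirely, and works uniformly for singular $X$ since everything happens on the smooth locus; the paper's casework, though heavier, extracts additional geometric structure along the way (e.g.\ that any moving component which is not a cone with vertex $T_P(\Lambda)$ must be a multiple linear $(N-2)$-space). To make your write-up airtight you should (i) cite the identification of $d\gamma$ with the second fundamental form (available in \cite{Zak}), (ii) note explicitly that star points are dense in $\Lambda$ and restrict to the smooth locus of $\Lambda$ before running the vanishing-differential-implies-constant argument, and (iii) record the one-line proof that $\mathrm{Vert}(Y)$ is linear and closed; none of these is a gap in substance.
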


\begin{corollary}
The set of hyperplanes that do occur as tangent spaces
to $X$ at star points is finite.
\end{corollary}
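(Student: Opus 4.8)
The plan is to deduce this directly from Theorem \ref{thm main}. First I would observe that the closure of the locus of star points on $X$ is a closed subset of $\mathbb{P}^N$, hence Noetherian, so it has only finitely many irreducible components, say $\Lambda_1, \dots, \Lambda_k$. By Theorem \ref{thm main}, each $\Lambda_i$ is a linear subspace of $\mathbb{P}^N$ and comes equipped with a hyperplane $\Pi_i \supseteq \Lambda_i$ having the property that $\Pi_i$ is the tangent space $T_P(X)$ at every smooth point $P$ of $X$ lying on $\Lambda_i$.

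The key step is then to note that every star point $P$ is, by definition, a smooth point of $X$ and lies in the locus of star points, hence in its closure, hence in at least one component $\Lambda_i$. Applying the last-quoted property of Theorem \ref{thm main}, we obtain $T_P(X) = \Pi_i$. Consequently the tangent space to $X$ at any star point belongs to the finite set $\{\Pi_1, \dots, \Pi_k\}$, and the corollary follows.

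I expect no genuine obstacle here: the statement is essentially a repackaging of Theorem \ref{thm main}, and the only non-formal input is the finiteness of the number of irreducible components of a closed subvariety of $\mathbb{P}^N$, which is automatic. The one point I would take care to check is that a star point lying in an intersection $\Lambda_i \cap \Lambda_j$ of several components causes no inconsistency; but this is immediate, since the tangent space at a smooth point is uniquely determined, forcing $\Pi_i = \Pi_j$ at such a point. Thus the assignment sending a star point to its tangent space factors through the finite index set $\{1, \dots, k\}$, which yields the desired finiteness.
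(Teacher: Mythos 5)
Your proof is correct and is exactly the argument the paper intends: the corollary is stated without proof as an immediate consequence of Theorem \ref{thm main}, via the finiteness of the irreducible components of the closed star-point locus and the fact that each component carries a single tangent hyperplane $\Pi$. Your extra check at intersections $\Lambda_i \cap \Lambda_j$ is harmless and correctly resolved by the uniqueness of the tangent space at a smooth point.
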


\begin{proof}[Proof of Theorem \ref{thm main}]
Assume $\Lambda$ is a component of the space of star points of $X$ of dimension at least one. Let $P,P'$ be general points of $\Lambda$. As a
scheme, $T_P(X)\cap X$ is a cone with vertex $P$. In particular, if
$H$ is a hyperplane in $T_P(X)$ with $P\notin H$, then $H\cap X$ as a
scheme is a (not necessarily reduced or irreducible) hypersurface of
degree $d$ in $H$ such that $X\cap T_P(X)$ as a scheme is the
cone on $H\cap X$ with vertex $P$. Let $$\pi :(T_P(X)\cap X)\setminus
\{P \}\rightarrow H\cap X$$ be the projection map. Let $C_1, \ldots,
C_k$ be the irreducible components of $T_P(X)\cap X$. Moving $P$ to
$P'$ on $\Lambda$, we obtain corresponding
components $C'_1, \ldots, C'_k$ of $T_{P'}(X)\cap X$. We say a
component $C_i$ is moving (resp. not moving) with $P$ on $\Lambda$ in
case $C'_i\neq C_i$ (resp. $C'_i=C_i$). If $C_i$ is a moving component, the closure of the union of the components $C_i$ is equal to $X$.

Assume that $T_{P'}(X)=T_P(X)$. In particular, we have that $\Lambda \subset T_P(X)$. Since $P'$ is a star point of
$X$, it follows that for each point $Q$ on $T_P(X)\cap X$ one has $\langle
P',Q\rangle \subset X$, hence $\langle \pi (P'),\pi (Q)\rangle
\subset X\cap H$. This implies $X\cap H$ is a cone with vertex $\pi
(P')$ and therefore $X\cap T_P(X)$ is a cone with vertex $\langle P,
P'\rangle$. This holds for any $P'\in \Lambda$, hence $T_P(X)\cap X$
is a cone with vertex $\langle \Lambda \rangle$. It follows that
$\langle \Lambda \rangle \subset X$ and in case $P''\in \langle \Lambda
\rangle$ is a smooth point on $X$, then $P''$ is a star point on $X$
and $T_{P''}(X)=T_P(X)$. This is in agreement with the statement of the
theorem.

From now on, we assume $T_P(X)\neq T_{P'}(X)$. If moreover all components of $T_P(X)\cap X$ would be cones with vertex $T_P(\Lambda)$, then a general point of $T_P(\Lambda)$ is a star point of $X$, therefore $\Lambda=T_P(\Lambda)$. Thus $\Lambda$ is a linear subspace and $T_P(X)\cap X$ is singular at each point of $\Lambda$, hence $T_{P'}(X)=T_P(X)$, a contradiction. This implies that $T_P(X)\cap X$ has a component $C_P$ moving with $P\in \Lambda$ such that $C_P$ is not a cone with vertex $T_P(\Lambda)$. Indeed, if $T_P(X)\cap X$ has a component $C$ that does not move with $P$ on $\Lambda$, then $C=T_P(X)\cap T_{P'}(X)$. It follows that $C$ is a linear subspace of dimension $N-2$ contained in $X$ and containing $\Lambda$, hence also a cone with vertex $T_P(\Lambda)$.

Let $Q$ be a general point on $C_P$. The proof of \cite[Theorem 4.2]{CoCo} implies
that the line $\langle P, Q\rangle$ contains a singular point $R$
of $X$. As a matter of fact, in the case of smooth hypersurfaces this holds for any component of $T_P(X)\cap X$ using the fact that no such component is a cone with vertex $T_P(\Lambda)$. So in the case of singular hypersurfaces, we needed to show the existence of such a component in advance. It follows that the scheme $T_P(X)\cap X$ is singular at
$R$, hence $H\cap X$ is singular at $\pi(R)=\pi(Q)$. Since $\pi(Q)$ is a general point of the component $\pi(C_P)$ of $H\cap X$, it
follows that $\pi(C_P)$ is a multiple component of $H\cap X$ and
therefore $C_P$ is a multiple component of $T_P(X)\cap X$. In
particular, the scheme $T_P(X)\cap X$ is singular at $Q$. Since $X$
is smooth at $Q$, it follows that $T_P(X)=T_Q(X)$. Since $C_P$ is moving with $P\in\Lambda$, this implies that the Gauss map of $X$ has fibers of dimension $N-2$. Since the general fibers of the Gauss map are linear subspaces (see \cite[Theorem 2.3(c)]{Zak}), it follows that $C_P$ is linear subspace of dimension $N-2$ in $\mathbb{P}^N$. In case $T_P(X)\cap X$ would also have a component $C$ not moving with $P\in\Lambda$, it follows that a general hyperplane $\Pi$ of $\mathbb{P}^N$ containing the linear subspace $C$ is the tangent hyperplane of a star
point $P'$ on $\Lambda$. Since $T_{P'}(X)\cap X$ contains also the moving multiple component $C_{P'}\neq C$, this contradicts Bertini's Theorem for linear systems on singular varieties (see \cite[Theorem 4.1]{Kle}). So from now on, we can also assume that all components of $T_P(X)\cap X$ are moving, one of which is the linear subspace $C_P$.

Assume that $P'\not\in T_P(X)$ for $P'\in\Lambda$ general, so $H=T_P(X)\cap T_{P'}(X)$ is a hyperplane of $T_P(X)$ not passing through $P$. In particular, we have that $H\neq C_P$, thus the linear subspace $H'=C_P\cap H$ is $(N-3)$-dimensional. Let $L$ be a general line in $C_P$ through $P$ and let $Q\in H'$ be the intersection point of $H$ and $L$. Since $P'$ is a star point and $Q\in T_{P'}(X)$, we have $\langle Q,P'\rangle\subset X$, hence $P'\in T_Q(X)$. On the other hand, we have $T_P(X)\subset T_Q(X)$, hence $T_Q(X)=\mathbb{P}^N$ and $Q$ is a singular point of $X$. In particular, the point $Q$ does not move with $P'$ on $\Lambda$, since $L$ contains only finitely many singular points of $X$. We conclude that $H'$ does not move with $P'\in \Lambda$, hence $H'\subset T_{P''}(X)$ for
$P''$ general on $\Lambda$. Since $P''$ is a star point on $X$, we find
$\langle H', P''\rangle \subset X$ for $P''\in \Lambda$. In case
$\langle H', P''\rangle$ would not move with $P''$ on $\Lambda$, it would
imply $\Lambda \subset T_P(X)$, hence a contradiction. Therefore
$\langle H',P''\rangle$ moves with $P''$ on $\Lambda$ and this implies
$X$ is the closure of the union of those linear spaces of dimension $N-2$ inside $\mathbb{P}^N$.
This proves $X$ is a cone with vertex $H'$ on a plane curve $\gamma$ of degree
$d\geq 3$. Now star points correspond to points in $\langle
H',P''\rangle$ with $P''$ a total inflection point of $\gamma$. This
would imply $\Lambda \subset \langle H',P''\rangle$ for such a point
$P''$ hence $\Lambda \subset T_{P''}(X)$, giving a contradiction.

So we find that $P'\in T_P(X)$, hence $\langle\Lambda\rangle\subset T_P(X)\cap X$.
Let $D_P$ be a component of $T_P(X)\cap X$ containing $\langle\Lambda\rangle$. If $D_P$ is not a cone with vertex $T_P(\Lambda)$, then $D_P$ is a multiple component, hence $\Lambda\subset \text{Sing}(T_P(X)\cap X)$. We obtain $T_{P'}(X)=T_P(X)$, a contradiction. So we have that each component $D_P$ of $T_P(X)\cap X$ containing $\langle\Lambda\rangle$ is a cone with vertex $T_P(\Lambda)$. In particular, there exists a component $D_P\neq C_P$.

Assume that $T_P(\Lambda)\subset \text{Sing}(T_P(X)\cap X)$. If $P'\in C_P$ for $P'$ general, it follows that $T_{P'}(X)=T_P(X)$ since $C_P$ is a multiple component of $T_P(X)\cap X$, a contradiction. Hence we have that $P'\not\in C_P$ and also $P\not\in C_{P'}$. Since $T_P(\Lambda)\subset \langle\Lambda\rangle\subset T_{P'}(X)$ and $C_{P'}$ is a hyperplane in $T_{P'}(X)$ not through $P$, we find that each line $L$ in $T_P(\Lambda)$ containing $P$ intersects $C_{P'}$ in a point $Q_{P'}\neq P$. If $Q_{P'}$ is a smooth point on $X$, then $T_{Q_{P'}}(X)=T_{P'}(X)$ since $C_{P'}$ is a multiple component. On the other hand, $T_{Q_{P'}}(X)=T_P(X)$ since $Q_{P'}\in L\subset \text{Sing}(T_P(X)\cap X)$. This would imply that $T_{P'}(X)=T_P(X)$, a contradiction. Therefore, the point $Q_{P'}$ is singular on $X$ and because $L\cap\text{Sing}(X)$ is finite, it follows that $Q_{P'}$ does not move with $P'\in\Lambda$, hence $Q_{P'}\in C_P$. But $C_P$ is an $(N-2)$-dimensional linear subspace containing $P$, hence $L\subset C_P$ and thus $T_P(\Lambda)\subset C_P$, a contradiction.

Finally, assume that $T_P(\Lambda)\not\subset \text{Sing}(T_P(X)\cap X)$. Hence there is only one component $D_P$ of $T_P(X)\cap X$ having vertex $T_P(\Lambda)$. Moreover, the component $D_P$ is simple and it is an $(N-2)$-dimensional linear subspace (otherwise $T_P(\Lambda)$ would be contained in the singular locus). Take $Q\in\Lambda$ general. Since $Q\in\langle\Lambda\rangle\subset D_P$ and $D_P$ is a linear space on $X$, it follows that  $D_P\subset T_Q(X)$. But the union of the spaces $D_P$ is $X$, thus $X\subset T_Q(X)$, a contradiction.
\end{proof}

\begin{example} \label{example coordinates}
Let $X$ be a hypersurface in $\mathbb{P}^N$ and assume a linear subspace $\Lambda$
of dimension $0\leq \lambda\leq N-2$ is a component of the closure of the set of star points on $X$. Let $\Pi$ be the tangent space $T_P(X)$ of a smooth point $P\in \Lambda$. We can choose projective coordinates $(X_0:X_1:\ldots:X_N)$ on $\mathbb{P}^N$ such that $\Lambda$ has equation $X_{\lambda+1}=\ldots=X_N=0$ and $\Pi$ as equation $X_{\lambda+1}=0$. We can write the defining polynomial $f$ of $X$ as $X_{\lambda+1}.h+g$, where $g$ is independent of the variable $X_{\lambda+1}$. The intersection of $X$ with the hyperplane $\Pi$ is a cone with vertex $\Lambda$. On the other hand, it is defined by $g=0$, hence $g$ is independent of $X_0,\ldots,X_{\lambda}$. So we get that the polynomial $f$ is of the form $$X_{\lambda+1}.h(X_0,\ldots,X_N)+g(X_{\lambda+2},\ldots,X_N).$$ We conclude that there exists a family of hypersurfaces of dimension $${N+d-1\choose N}+{N+d-\lambda-2\choose N-\lambda-2}-1,$$ such that each member $X$ has star points in $\Lambda\setminus (\Lambda\cap\text{Sing}(X))$. If $X$ is a general element of the family, its singular locus is given by $X_{\lambda+1}=\ldots=X_N=h(X_0,\ldots,X_N)=0$, so $\text{Sing}(X)$ is a hypersurface of degree $d-1$ in $\Lambda$.
\end{example}

\section{Extremal case} \label{sec lambda maximal}

In this section, we consider the case where $\lambda$ is maximal, i.e. $\lambda=N-2$ (see Theorem \ref{thm main}).
Assume there do exist two such linear
subspaces $\Lambda_1$ and $\Lambda_2$ of $X$. Let $\Pi_1$ and $\Pi_2$ be the
corresponding hyperplanes in $\mathbb{P}^N$, then we have that $\dim (\Pi_2\cap
\Lambda_1)=N-3$. Since $\Pi_2\cap\Lambda_1\subset \Pi_2\cap X=\Lambda_2$, we get that
$\dim (\Lambda_1\cap \Lambda_2)=N-3$ and
therefore $\dim (\langle \Lambda_1, \Lambda_2\rangle )=N-1$.

Assume $\Lambda_3$ is another such linear subspace of dimension $N-2$ and
assume $\Lambda_3$ does not contain $\Lambda_1\cap \Lambda_2$. Since $\dim (\Lambda_1\cap
\Lambda_3)=N-3$ and $\dim (\Lambda_2 \cap \Lambda_3)=N-3$, it follows that $\Lambda_3 \subset
\langle \Lambda_1,\Lambda_2\rangle$.

\begin{proposition} \label{prop extremal}
Let $X$ be an irreducible reduced hypersurface in $\mathbb{P}^N$ and let $\mathcal{S}$
be the set of $(N-2)$-dimensional linear subspaces $\Lambda$ of $X$ such that
a general element of $\Lambda$ is a star point of $X$.
If $\mathcal{S}$ has at least two elements, then one of the following two possibilities holds.
\begin{enumerate}
\item \label{case L} There exists a linear subspace $L$ of dimension $N-3$
such that all such $\Lambda\in \mathcal{S}$ do contain $L$.
\item \label{case Pi} There exists a linear subspace $H$ of dimension $N-1$
such that all such $\Lambda\in \mathcal{S}$ are contained in $H$.
\end{enumerate}
\end{proposition}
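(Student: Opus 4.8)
The plan is to set aside the geometry of $X$ and prove a purely linear statement: any family of $(N-2)$-planes in $\mathbb{P}^N$ that pairwise meet in $(N-3)$-planes either passes through a common $(N-3)$-plane or lies in a common hyperplane. This is the higher-dimensional analogue of the classical dichotomy for pairwise-intersecting lines in $\mathbb{P}^3$ (concurrent in a point, or coplanar), and the two inputs I need are already at hand from the discussion preceding the statement: any two members of $\mathcal{S}$ meet in dimension $N-3$ and span a hyperplane of dimension $N-1$, and if a third member fails to contain the intersection of the first two then it is forced into their span.

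First I would fix $\Lambda_1,\Lambda_2\in\mathcal{S}$ and set $L=\Lambda_1\cap\Lambda_2$ (of dimension $N-3$) and $H=\langle\Lambda_1,\Lambda_2\rangle$ (of dimension $N-1$). The dichotomy is then organized by whether $L$ is common to the whole family. If every $\Lambda\in\mathcal{S}$ contains $L$, then possibility~\ref{case L} holds with this $L$ and we are finished. Otherwise there is some $\Lambda_3\in\mathcal{S}$ with $L\not\subset\Lambda_3$, and by the preliminary observation $\Lambda_3\subset H$. I would then record the key dimension count: since $L\cap\Lambda_3$ is a proper subspace of $L$, the triple intersection $\Lambda_1\cap\Lambda_2\cap\Lambda_3=L\cap\Lambda_3$ has dimension exactly $N-4$, the lower bound $N-4$ coming from intersecting the three hyperplanes $\Lambda_1,\Lambda_2,\Lambda_3$ inside $H\cong\mathbb{P}^{N-1}$.

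The crux is to promote this to the full family, that is, to show $\Lambda\subset H$ for every $\Lambda\in\mathcal{S}$, which yields possibility~\ref{case Pi}. I would argue by contradiction: if some $\Lambda_4\in\mathcal{S}$ is not contained in $H$, then $\dim(\Lambda_4\cap H)=(N-2)+(N-1)-N=N-3$. For each $i\in\{1,2,3\}$ one has $\Lambda_4\cap\Lambda_i\subseteq\Lambda_4\cap H$, and both sides have dimension $N-3$, so they coincide; hence $\Lambda_4\cap H\subseteq\Lambda_i$ for all three $i$, giving $\Lambda_4\cap H\subseteq\Lambda_1\cap\Lambda_2\cap\Lambda_3$. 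This contradicts the dimension count, since $\dim(\Lambda_4\cap H)=N-3>N-4$. The main obstacle is precisely this last packaging: one must select the three reference planes so that their triple intersection drops to dimension $N-4$ (the role of the ``non-concurrent'' configuration in the line case), and then verify that a hypothetical stray $\Lambda_4$ would be forced to meet all three along one and the same $(N-3)$-plane, which is impossible. Everything else is bookkeeping with the intersection and span dimensions already established before the statement.
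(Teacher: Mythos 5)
Your proof is correct and follows essentially the same route as the paper, whose entire argument for this proposition consists of the two observations you invoke (any two members of $\mathcal{S}$ meet in dimension $N-3$ and hence span a hyperplane; a member not containing $\Lambda_1\cap\Lambda_2$ is forced into $\langle\Lambda_1,\Lambda_2\rangle$), stated in the discussion immediately preceding the statement with the remaining bookkeeping left implicit. Your final step --- showing a hypothetical $\Lambda_4\not\subset H$ would have $\Lambda_4\cap H$ an $(N-3)$-plane trapped inside the $(N-4)$-dimensional triple intersection $\Lambda_1\cap\Lambda_2\cap\Lambda_3$ --- is a correct and explicit completion of exactly the step the paper treats as immediate.
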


\begin{remark}
Cones over plane curves do give examples of Case \ref{case L} in Proposition \ref{prop extremal}.
If we are in Case \ref{case L} and if $\mathcal{S}$ has at least $d$ elements, the hypersurface $X$
will automatically be a cone over a plane curve with vertex $L$. Indeed, assume there are $d$
such linear spaces $\Lambda_1, \ldots, \Lambda_d\in \mathcal{S}$. They correspond to $d$
different tangent hyperplanes $\Pi_1, \ldots, \Pi_d$. Take $P\in L$
general and $Q\in X$ general. It is enough to prove that
$\langle P, Q\rangle \subset X$. Choose a plane $V$ containing $P$
and $Q$ such that $L_i=\Pi_i\cap V$ are $d$ different lines through
$P$. Let $\gamma =V \cap X$ considered as a divisor on $V$. Since $\Pi_i
\cap X=d\Lambda_i$ as a divisor on $\Pi_i$ and $L_i$ is a line on $\Pi_i$
through $P$, it follows that $D_i=dP$ as a divisor on $L_i$ is a
closed subscheme of $\gamma$. Let $V'$ be the blowing-up of $V$
at $P$, let $E$ be the associated exceptional divisor on $V'$ and
let $L_i'$ (resp. $\gamma '$) be the proper transform of $L_i$
(resp. $\gamma$) on $V'$. In case the multiplicity of $\gamma$ would be smaller than $d$, we have that $\gamma'$ contains $L_i'\cap E$
for $1\leq i\leq d$. Since the lines $L_i$
are $d$ different lines, it follows that $\gamma'$ contains $d$
different points of $E$, which contradicts the assumption. This implies that the
multiplicity of $\gamma$ at $P$ is equal to $d$, hence $\langle P,Q\rangle \subset \gamma\subset X$
since $Q\in \gamma$.

It follows that in this case the set $\mathcal{S}$ contains at most $3d$ linear subspaces, since
a plane curve of degree $d$ has at most $3d$ total inflection points (see e.g. \cite[IV, Ex. 2.3(e)]{Hart}). Equality holds for the cone over the Fermat curve of degree $d$.
\end{remark}

\begin{remark}
In Case \ref{case Pi}, the set $\mathcal{S}$ contains at most $d$ such linear subspaces,
since $H \cap X$ is a divisor of degree $d$ in $H$.
\end{remark}

\begin{example}
Assume $X$ is a hypersurface satisfying Case \ref{case Pi} and $\mathcal{S}$ has $d$ elements $\Lambda_1,\ldots,\Lambda_d\subset H$ with corresponding to tangent spaces $\Pi_1,\ldots,\Pi_d$. We can choose projective coordinates $(X_0:\ldots:X_N)$ on $\mathbb{P}^N$ such that $H$ has equation $X_N=0$ and $\Pi_i$ has equation $l_i(X_0,\ldots,X_N)=0$, hence $\Lambda_i$ is defined by $X_N=l_i=0$. One can see that the equation of $X$ is of the form $$f(X_0,\ldots,X_N)\equiv \prod_{i=1}^d\,l_i(X_0,\ldots,X_N)+\alpha X_N^d=0$$ with $\alpha\in\mathbb{C}$, hence there is a $1$-dimensional family of hypersurfaces having star points in the general points of $\Lambda_1,\ldots,\Lambda_d$.
\end{example}

\section{Surfaces in $\mathbb{P}^3$ with a line of star points} \label{sec surface}

In this section, we consider the special case where $N=3$ and $\lambda=1$, so $X$ is a surface of degree $d\geq 3$ in $\mathbb{P}^3$ and $\Lambda$ is a line on $X$ such that the smooth points of $X$ contained in $\Lambda$ are star points of $X$. We can take projective coordinates $(X_0:X_1:X_2:X_3)$ on $\mathbb{P}^3$ such that $\Lambda$ has equation $X_2=X_3=0$ and $\Pi$ has equation $X_2=0$. From Example \ref{example coordinates} follows that the equation of $X$ can be written as $f\equiv X_2.h(X_0,X_1,X_2,X_3)+g(X_3)=0$. We may take $g(X_3)\equiv X_3^d$. If we write $h(X_0,X_1,X_2,X_3)$ as $$X_3 L_3(X_0,X_1,X_2,X_3)+X_2 L_2(X_0,X_1,X_2)+L(X_0,X_1),$$ the intersection $\text{Sing}(X)\cap \Lambda$ is defined by $X_2=X_3=L(X_0,X_1)=0$. Assume $\text{Sing}(X)\cap \Lambda$ consists of $d-1$ different points. We can choose the coordinates so that $P_0=(1:0:0:0)$ is one of those points, so $L(1,0)=0$, hence $L(X_0,X_1)=X_1L_1(X_0,X_1)$ and $L_1(1,0)\neq 0$. Consider the affine coordinates $(x_1,x_2,x_3)$ on the chart $X_0\neq 0$ as local coordinates around $P_0$ (so $x_i=X_i/X_0$). Using these coordinates, $X$ is defined by $$f\equiv x_2(x_3l_3(x_1,x_2,x_3)+x_2l_2(x_1,x_2)+x_1l_1(x_1))+x_3^d,$$ where $l_i$ is the polynomial corresponding to the form $L_i$. The transformation defined by
\begin{equation*} \left\{ \begin{array}{lll} y_1=x_3l_3(x_1,x_2,x_3)+x_2l_2(x_1,x_2)+x_1l_1(x_1) \\ y_2=x_2 \\ y_3=x_3 \end{array}\right..\end{equation*} is a transformation of local coordinates since $l_1(0)\neq 0$. The equation of $X$ in the local coordinate system $(y_1,y_2,y_3)$ is $y_1y_2+y_3^d$. Using a second transformation of local coordinates defined by
\begin{equation*} \left\{ \begin{array}{lll} z_1=\frac{y_1+y_2}{2} \\ z_2=\frac{i(y_1-y_2)}{2} \\ z_3=y_3 \end{array}\right., \end{equation*}
the equation of $X$ becomes $z_1^2+z_2^2+z_3^d=0$, hence $P_0$ is an $A_{d-1}$-singularity of $X$. We conclude that the $d-1$ points in $\text{Sing}(X)\cap \Lambda$ are $A_{d-1}$-singularities, if they are pairwise different. The following proposition states that the converse also holds.

\begin{proposition} \label{prop d-1 A_d-1}
Let $X\subset \mathbb{P}^3$ be a surface of degree $d\geq 3$. Assume $\Lambda$ is a line on $X$ such that $\Lambda\not\subset\text{Sing}(X)$ and assume $\Lambda$ contains $d-1$ $A_{d-1}$-singularities of $X$. Then a general point of $\Lambda$ is a star point of $X$.
\end{proposition}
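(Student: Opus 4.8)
The plan is to show that the hypotheses force $X$ to meet a fixed hyperplane $\Pi\supset\Lambda$ exactly in $d\Lambda$; by (the trivial converse direction of) Example \ref{ex general} this makes every smooth point of $\Lambda$ a star point. I would set up coordinates as in the discussion preceding the proposition: choose $(X_0:X_1:X_2:X_3)$ with $\Lambda=\{X_2=X_3=0\}$ and write $f=X_2A+X_3B$ with $B=B(X_0,X_1,X_3)$ independent of $X_2$ (possible since $\Lambda\subset X$). Put $\alpha=A(X_0,X_1,0,0)$ and $\beta=B(X_0,X_1,0)$, two binary forms of degree $d-1$; the singular points of $X$ on $\Lambda$ are exactly their common zeros, and the tangent plane at a smooth point $(a:b:0:0)$ is $\{\alpha(a,b)X_2+\beta(a,b)X_3=0\}$. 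Because the $d-1$ assumed $A_{d-1}$-points are distinct, $\alpha$ and $\beta$ share $d-1$ distinct zeros; being of degree $d-1$ they are therefore proportional (one possibly zero), so the tangent plane is a single fixed plane $\Pi$ containing $\Lambda$. After a linear change of coordinates I may assume $\Pi=\{X_2=0\}$, i.e.\ $\beta\equiv 0$; then $B=X_3C$ and $f=X_2A+X_3^2C$, with $\alpha=A(X_0,X_1,0,0)$ having the $d-1$ points as its simple zeros. In these coordinates $\Pi\cap X$ is cut out by $X_3^2C(X_0,X_1,0,X_3)$, so the whole problem reduces to proving $C(X_0,X_1,0,X_3)=c\,X_3^{d-2}$, equivalently $\Pi\cap X=d\Lambda$.

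Next I would run a local analysis at the singular points. At $P_i$, in affine coordinates $x_j=X_j/X_0$, the equation is $f_{\mathrm{loc}}=x_2a+x_3^2c$ with $a(0)=0$, $\tfrac{\partial a}{\partial x_1}(0)\neq 0$ (simple zero of $\alpha$), and $c(x_1,0,0)\equiv 0$. The quadratic part of $f_{\mathrm{loc}}$ is $x_2\bigl(\tfrac{\partial a}{\partial x_1}(0)x_1+\cdots\bigr)+c(0)\,x_3^2$, whose determinant is a nonzero multiple of $c(0)$; since an $A_{d-1}$-singularity with $d\geq 3$ has corank $1$, this forces $c(P_i)=0$. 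As $C(X_0,X_1,0,0)$ is then a binary form of degree $d-2$ vanishing at the $d-1$ points $P_i$, it vanishes identically, so $X_3\mid C(X_0,X_1,0,X_3)$ and $\Pi\cap X\supseteq 3\Lambda$. This already settles $d=3$, where $C(X_0,X_1,0,X_3)$ has degree $1$ and is thus forced to be $c\,X_3$.

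For $d\geq 4$ I would extract the full strength of the $A_{d-1}$-condition. Applying the splitting lemma to $f_{\mathrm{loc}}=x_2a+x_3^2c$ (the quadratic part has rank $2$, with Hessian kernel a line $\kappa_i$), the singularity is $A_{d-1}$ precisely when the residual function along $\kappa_i$ vanishes to order exactly $d$. Feeding in the global shape $f=X_2A+X_3^2C$—which makes the low-order coefficients of $c$ in the $X_1$-direction vanish because $C$ vanishes along $\Lambda$—I expect this to translate into $\operatorname{mult}_{P_i}(\Pi\cap X)=d$, i.e.\ $\operatorname{mult}_{P_i}\bigl(C(X_0,X_1,0,X_3)\bigr)=d-2$. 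Granting this multiplicity statement at just two of the points $P_i$ finishes cleanly: a plane curve of degree $d-2$ having multiplicity $d-2$ at two distinct points must be $(d-2)$ times the line joining them, here $(d-2)\Lambda$; hence $C(X_0,X_1,0,X_3)=c\,X_3^{d-2}$ and $\Pi\cap X=d\Lambda$.

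The main obstacle is exactly this multiplicity computation. The difficulty is that the $A_{d-1}$-data naturally controls $X$ only along the Hessian-kernel direction $\kappa_i$, whereas I need to control the section of $X$ by the specific linear plane $\Pi$; an arbitrary plane through the correct tangent-cone line meets an $A_{d-1}$-point only in multiplicity $2$, so the jump to multiplicity $d$ cannot come from the germ at $P_i$ alone. It must instead be forced by the fact that $\Pi$ is tangent to $X$ not merely at $P_i$ but along the entire line $\Lambda$ (this is precisely what the factor $X_3^2$, i.e.\ $\beta\equiv 0$, records). Converting this global tangency into the required osculation order at each $P_i$ is the crux; once it is in hand, the degree/multiplicity count above makes the remainder routine, and Example \ref{ex general} then yields that a general point of $\Lambda$ is a star point.
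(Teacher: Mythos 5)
Your setup reproduces the first half of the paper's own proof almost verbatim: the paper likewise writes $f=X_3g(X_0,X_1,X_3)+X_2(\cdots+L(X_0,X_1))$, deduces from the $d-1$ distinct singular points that the two binary forms of degree $d-1$ are proportional ($g(X_0,X_1,0)\equiv\alpha L(X_0,X_1)$, your $\beta=c\,\alpha$), normalizes the common tangent plane to $\Pi=\{X_2=0\}$, and reduces everything to showing $\Pi\cap X=d\Lambda$. Your corank-$1$ observation (an $A_{d-1}$-germ with $d\geq 3$ has Hessian rank $2$, forcing $c(P_i)=0$) is sound and does settle $d=3$. But for $d\geq 4$ the proof has a genuine gap, and it is exactly the one you flag yourself: the claim $\operatorname{mult}_{P_i}(\Pi\cap X)=d$ is never established, and it cannot be, pointwise, from the $A_{d-1}$ germ plus tangency of $\Pi$ along $\Lambda$ alone. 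As an intermediate lemma it is essentially equivalent to the conclusion: if $\Pi\cap X=m\Lambda+\Gamma$ with $m<d$, then at the points $P_i$ off $\Gamma$ the multiplicity is $m<d$, so no local-to-global translation of the kind you hope for can certify multiplicity $d$ at each $P_i$ before one already knows $m=d$.

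The paper closes this gap by inverting your strategy: instead of forcing high multiplicity at every singular point, it forces the $A$-type at one well-chosen point. Write $f|_{\Pi}=X_3^mG(X_0,X_1,X_3)$ with $m$ maximal, so $\Pi\cap X=m\Lambda+\Gamma$ with $\deg\Gamma=d-m$ and (since $m\geq 2$; you even have $m\geq 3$) $\Gamma\cap\Lambda$ consists of at most $d-m<d-1$ points. Hence some singular point $P\in\Lambda$ lies off $\Gamma$, i.e.\ $G(P)\neq 0$, and there the germ is completely computable: the same chain of coordinate changes carried out in the discussion preceding the proposition (with $x_3^m\cdot(\text{unit})$ in place of $x_3^d$, using that $P$ is a simple zero of $L$ because $L$ has degree $d-1$ and $d-1$ distinct zeros) brings $f$ to the form $y_1y_2+y_3^m\cdot(\text{unit})$, an $A_{m-1}$-singularity. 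Since $P$ is by hypothesis $A_{d-1}$ and the $A_k$-type is an analytic invariant, $m=d$, which is precisely your target $C(X_0,X_1,X_3)=c\,X_3^{d-2}$. So the ``crux'' you isolate --- converting global tangency into osculation order $d$ at each $P_i$ --- is sidestepped: the global input enters only through the elementary count $\#(\Gamma\cap\Lambda)\leq d-m$, and the local input is the normal-form computation you already have in hand from your splitting-lemma analysis, applied at a single point where the residual curve is absent. With that substitution your argument becomes complete and is then essentially the paper's proof.
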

\begin{proof}
We can choose projective coordinates on $\mathbb{P}^3$ so that $\Lambda$ is given by $X_2=X_3=0$. The equation of $X$ can be written as $$X_3g(X_0,X_1,X_3)+X_2(X_3L_3(X_0,X_1,X_2,X_3)+X_2L_2(X_0,X_1,X_2)+L(X_0,X_1)).$$ The intersection $\Sing(X)\cap\Lambda$ is defined by $$X_2=X_3=g(X_0,X_1,0)=L(X_0,X_1)=0$$ and contains $d-1$ points, so there exists a complex number $\alpha$ such that $g(X_0,X_1,0)\equiv \alpha L(X_0,X_1)$. The tangent space $\Pi$ at a smooth point on $\Lambda$ has equation $X_2+\alpha X_3=0$. We can change the coordinates on $\mathbb{P}^3$ so that $\alpha=0$, hence $g(X_0,X_1,0)\equiv 0$. Write $X_3 g(X_0,X_1,X_3)=X_3^m G(X_0,X_1,X_3)$ where $m$ is maximal. Note that $m\geq 2$ and we need to prove that $m=d$. Let $\Gamma$ be the curve defined by $X_2=G(X_0,X_1,X_3)=0$, hence $X\cap\Pi=m\Lambda+\Gamma$ as divisors on $\Pi$. Since $\Gamma\cap\Lambda$ consists of at most $d-m<d-1$ points, we can take a point $P\in\text{Sing}(X)\cap\Lambda$ such that $P\not\in \Gamma\cap\Lambda$.    We can also take the coordinates on $\mathbb{P}^3$ such that $P=(1:0:0:0)$. It is easy to see that $P$ is an $A_{m-1}$-singularity, hence $m=d$.
\end{proof}

\begin{remark}
Note that in the above Proposition, it suffices to assume that $\Lambda$ is a line containing $d-1$ $A_{d-1}$-singularities of $X$, since a line containing $d-1$ singular points of a surface $X$ is contained in $X$. Moreover, an $A_{d-1}$-singularity is isolated, so $\Lambda\not\subset\text{Sing}(X)$.
\end{remark}

In case of cubic surfaces, the above example was already observed in \cite{Cia}. In that paper, the author expects this is the only case giving rise to cubic surfaces having infinitely many star points. However, this is not true. Indeed, as a trivial example, one can consider a cone $X$ over a cubic curve $\Gamma$. In this case, a smooth inflection point on $\Gamma$ corresponds to a line of star points with exactly one singular point of $X$ and this singularity is not even rational. More generally, assume $\Lambda$ contains only one singular point of a cubic surface $X$. By taking projective coordinates $(X_0:X_1:X_2:X_3)$ on $\mathbb{P}^3$ as before (such that $\Lambda$ has equation $X_2=X_3=0$ and $\Pi$ has equation $X_2=0$), we can write the equation of the surface $X$ as $$f\equiv X_2(X_3 L_3(X_0,X_1,X_2,X_3)+X_2 L_2(X_0,X_1,X_2)+L(X_0,X_1))+X_3^3=0,$$ where $L$ is a square. We may assume that $L(X_0,X_1)=X_1^2$, hence the only singular point of $X$ on $\Lambda$ is $P=(1:0:0:0)$. The intersection of the plane $X_3=0$ and the surface consists of the line $\Lambda$ and the conic with equation $X_3=X_2L_2(X_0,X_1,X_2)+X_1^2=0$. This conic is non-singular if $L_2(1,0,0)\neq 0$. In that case, the surface $X$ has a unique singular point on $\Lambda$ (of type $A_5$ if $L_3(0,0,0,1)\neq 0$ and type $E_6$ if $L_3(0,0,0,1)=0$), hence this case is also different from the example in \cite{Cia}. Of course, both examples are specialisations of the example in \cite{Cia}. In that way, the statement of \cite{Cia} can be adjusted and generalised as follows.

\begin{theorem}
Let $X$ be a surface of degree $d\geq 3$ in $\mathbb{P}^3$ and assume there is an irreducible curve $\Lambda$ on $X$ such that a general point of $\Lambda$ is a star point on $X$. Then $\Lambda$ is a line in $\mathbb{P}^3$ and there exists a $1$-parameter family $(X(t),\Lambda(t))$ with $X(0)=X$, $\Lambda(0)=\Lambda$ and such that for $t\neq 0$, $X(t)$ is a surface of degree $d$ in $\mathbb{P}^3$ and $\Lambda(t)$ is a line on $X(t)$ containing $d-1$ $A_{d-1}$-singularities of $X(t)$. In particular, a general point on $\Lambda(t)$ is a star point on $X(t)$.
\end{theorem}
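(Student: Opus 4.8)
The plan is to combine Theorem \ref{thm main} with the normal form preceding Proposition \ref{prop d-1 A_d-1}, and then to realise the family by a one-parameter deformation of a single binary form. First I would show $\Lambda$ is a line: as a general point of the irreducible curve $\Lambda$ is a star point, $\Lambda$ lies in the closure of the locus of star points, so letting $\Lambda'$ be the component containing it, Theorem \ref{thm main} gives that $\Lambda'$ is linear of dimension $0\le\lambda\le N-2=1$. Since $\Lambda\subseteq\Lambda'$ is one-dimensional, $\lambda=1$ and $\Lambda=\Lambda'$ is a line. I would then take coordinates as at the start of Section \ref{sec surface}, so that $\Lambda\colon X_2=X_3=0$, the common tangent plane is $\Pi\colon X_2=0$, and
$$f=X_2\bigl(X_3L_3(X_0,X_1,X_2,X_3)+X_2L_2(X_0,X_1,X_2)+L(X_0,X_1)\bigr)+X_3^d,$$
with $L$ a binary form of degree $d-1$ and $L\not\equiv 0$ (the latter because $\Lambda\not\subset\Sing(X)$). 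A direct computation of the partial derivatives of $f$ along $\Lambda$ shows that $\Sing(X)\cap\Lambda$ is cut out on $\Lambda$ by $L=0$.

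The core of the argument is the deformation. I would fix a binary form $N$ of degree $d-1$ having $d-1$ distinct roots and set $L_t=L+tN$, $h_t=X_3L_3+X_2L_2+L_t$ and $f_t=X_2h_t+X_3^d$, defining the surface $X(t)$ with $\Lambda(t)=\Lambda$ held constant; the term $X_3^d$ keeps $\deg f_t=d$, and $f_0=f$. The discriminant of $L_t$ is a polynomial in $t$ whose leading coefficient equals $\operatorname{disc}(N)\neq 0$, hence it is not identically zero. Therefore, for $t\neq 0$ in a punctured neighbourhood of $0$, the form $L_t$ has $d-1$ distinct (so simple) roots, and $\Sing(X(t))\cap\Lambda$ consists of $d-1$ distinct points.

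At each such point the local computation carried out before Proposition \ref{prop d-1 A_d-1} applies unchanged: after moving the root to $(1:0:0:0)$ by a coordinate change in $X_0,X_1$ (which preserves the normal form), a simple root yields $L_t=X_1L_1$ with $L_1(1,0)\neq 0$, which makes the coordinate change $y_1=x_3l_3+x_2l_2+x_1l_1$, $y_2=x_2$, $y_3=x_3$ admissible and brings $f_t$ into the form $y_1y_2+y_3^d$, that is, an $A_{d-1}$-singularity. Thus $\Lambda(t)$ carries $d-1$ $A_{d-1}$-singularities of $X(t)$, and Proposition \ref{prop d-1 A_d-1} then shows that a general point of $\Lambda(t)$ is a star point of $X(t)$.

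The last point—which I expect to be the only one requiring care—is that each $X(t)$ should be an irreducible reduced surface of degree $d$, as demanded by the standing hypotheses of the paper. Since being reducible and being non-reduced are each closed conditions on the space of degree-$d$ forms and $f_0=f$ is irreducible and reduced, the polynomial $f_t$ remains irreducible and reduced for all small $t$; after shrinking the punctured neighbourhood of $0$ if necessary, $X(t)$ is an irreducible reduced surface of degree $d$ for every $t\neq 0$ in it. This produces the required family $(X(t),\Lambda(t))$ with $X(0)=X$ and $\Lambda(0)=\Lambda$.
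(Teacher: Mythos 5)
Your proof is correct and follows essentially the same route the paper intends: the paper states this theorem without a separate proof precisely because it follows from Theorem \ref{thm main} (linearity of $\Lambda$), the normal form $f\equiv X_2(X_3L_3+X_2L_2+L(X_0,X_1))+X_3^d$ derived at the start of Section \ref{sec surface}, and the local computation there showing that simple roots of $L$ give $A_{d-1}$-singularities. Your explicit deformation $L_t=L+tN$ with the discriminant argument, together with the openness of irreducible-reduced forms, just fills in the details the paper leaves implicit.
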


In a particular case of a surface $X$ with a line $\Lambda$ of star points, we can say something about the types of the singularities of $X$ on $\Lambda$ if there are less than $d-1$ singular points of $X$ on $\Lambda$.

\begin{lemma} \label{lem A_k}
Let $X$ be a surface in $\mathbb{P}^3$ of degree $d\geq 3$ defined by an equation of the form $$f\equiv X_2(X_3X_0^{d-2}+X_1^{\alpha}L(X_0,X_1))+X_3^d=0,$$ with $L(1,0)\neq 0$. Then $P=(1:0:0:0)$ is a singularity of $X$ of type $A_{d\alpha-1}$.
\end{lemma}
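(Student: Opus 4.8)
The plan is to dehomogenize on the affine chart $X_0\neq 0$ and then produce an explicit sequence of local analytic coordinate changes bringing $f$ into the standard $A_k$ normal form. Write $x_i=X_i/X_0$, so that $P=(1:0:0:0)$ becomes the origin, and put $\ell(x_1):=L(1,x_1)$, a polynomial with $\ell(0)=L(1,0)\neq 0$. Since $L$ is homogeneous of degree $d-1-\alpha$, dehomogenizing $f$ gives
$$f_{\mathrm{aff}}=x_2\bigl(x_3+x_1^{\alpha}\ell(x_1)\bigr)+x_3^{d}.$$
A direct check of the three partials shows all of them vanish at the origin (each term of $\partial_{x_1}f_{\mathrm{aff}}$ carries a factor $x_2$), so $P$ is indeed a singular point; the remaining task is to identify its type.

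First I would introduce the coordinates $(x_1,v,u)$ with $v=x_2$ and $u=x_3+x_1^{\alpha}\ell(x_1)$. This is a genuine local biholomorphism: the Jacobian of $(x_1,x_2,x_3)\mapsto(x_1,x_2,u)$ at the origin is lower triangular with $1$'s on the diagonal, hence invertible (the only off-diagonal entry, present when $\alpha=1$, is $\ell(0)$, which does not spoil invertibility). In these coordinates $x_3=u-x_1^{\alpha}\ell(x_1)$, so
$$f_{\mathrm{aff}}=uv+\bigl(u-x_1^{\alpha}\ell(x_1)\bigr)^{d}.$$
The key algebraic observation is that $\bigl(u-x_1^{\alpha}\ell\bigr)^{d}-\bigl(-x_1^{\alpha}\ell\bigr)^{d}$ vanishes at $u=0$, hence is divisible by $u$; setting $w(u,x_1):=\dfrac{(u-x_1^{\alpha}\ell)^{d}-(-x_1^{\alpha}\ell)^{d}}{u}$, an honest power series, we may complete the product by the further change $v'':=v+w(u,x_1)$ (again triangular, hence admissible), obtaining
$$f_{\mathrm{aff}}=u\,v''+(-1)^{d}x_1^{d\alpha}\,\ell(x_1)^{d}.$$

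Finally, since $\ell(0)\neq 0$ the factor $(-1)^{d}\ell(x_1)^{d}$ is a unit in the local ring, so it admits an analytic $d\alpha$-th root: choosing a branch $c(x_1)$ of $\bigl((-1)^{d}\ell(x_1)^{d}\bigr)^{1/(d\alpha)}$ with $c(0)\neq 0$ and setting $t:=x_1\,c(x_1)$ (a local coordinate because $\partial t/\partial x_1|_0=c(0)\neq 0$), the last summand becomes $t^{d\alpha}$, giving
$$f_{\mathrm{aff}}=u\,v''+t^{\,d\alpha}$$
in the coordinates $(t,v'',u)$. Over $\mathbb{C}$ the quadratic form $uv''$ is equivalent to $z_1^2+z_2^2$, so this is exactly the normal form $z_1^2+z_2^2+z_3^{\,d\alpha}$, i.e.\ an $A_{d\alpha-1}$-singularity, as claimed; note the specialisation $\alpha=1$ recovers the $A_{d-1}$-case treated before Proposition \ref{prop d-1 A_d-1}. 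The step I expect to require the most care is verifying that each substitution is a legitimate local biholomorphism and that the divisibility and root-extraction are valid, so that no cross-terms survive to lower the $A$-type; the exponent $d\alpha$ is forced precisely by the hypothesis $L(1,0)\neq 0$, which guarantees that the surviving term has order exactly $d\alpha$ in $x_1$.
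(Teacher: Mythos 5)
Your proof is correct and takes essentially the same route as the paper's: the identical first change of coordinates $u=x_3+x_1^{\alpha}\ell(x_1)$, then absorbing all mixed terms into the second coordinate (your divisibility-by-$u$ quotient $w$ is exactly the paper's explicit binomial sum $\sum_{i=1}^{d}\binom{d}{i}y_3^{i-1}(-y_1^{\alpha}l(y_1))^{d-i}$), and finally extracting a root of the unit $(-1)^d\ell(x_1)^d$ to rescale $x_1$ and reduce $uv''+t^{d\alpha}$ to the $A_{d\alpha-1}$ normal form. The only cosmetic differences are that the paper chooses $l'$ with $(l')^{\alpha}=-l$ instead of your $d\alpha$-th root $c$, and writes out the linear change turning $z_2z_3$ into $w_2^2+w_3^2$ explicitly.
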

\begin{proof}
We are going to work with the affine coordinates $(x_1,x_2,x_3)$ on the chart $X_0\neq 0$ (so $x_i=X_i/X_0$). The equation of $X$ becomes $$f\equiv x_2(x_3+x_1^{\alpha}l(x_1))+x_3^d=0,$$ with $l(x_1)$ the polynomial corresponding to the form $L(X_0,X_1)$. We are interested in the singularity $P$ in the origin. Consider the transformation \begin{equation*} \left\{ \begin{array}{lll} y_1=x_1 \\ y_2=x_2 \\ y_3=x_3+x_1^{\alpha}l(x_1) \end{array}\right.\end{equation*} of local coordinates around $P$. In the local coordinate system $(y_1,y_2,y_3)$, the equation of $X$ is given by
\begin{align*} f &\equiv y_2 y_3+[y_3-y_1^{\alpha}l(y_1)]^d \\
&\equiv y_2 y_3+ \sum_{i=1}^d\,{d\choose i} y_3^i (-y_1^{\alpha}l(y_1))^{d-i} + (-l(y_1))^d y_1^{d\alpha} \\
&\equiv y_3\left[y_2+\sum_{i=1}^d\,{d\choose i}y_3^{i-1}(-y_1^{\alpha}l(y_1))^{d-i}\right] + (-l(y_1))^d y_1^{d\alpha} = 0.
\end{align*}
In the local coordinate system defined by \begin{equation*} \left\{ \begin{array}{lll} z_1=y_1 \\ z_2=y_2+\sum_{i=1}^d\,{d\choose i}y_3^{i-1}(-y_1^{\alpha}l(y_1))^{d-i} \\ z_3=y_3 \end{array}\right.,\end{equation*} the equation of $X$ becomes $z_2 z_3+(-l(z_1))^d z_1^{d\alpha} = 0$. Since $l(0)\neq 0$, there exists a power series $l'(x)$ such that $(l'(x))^{\alpha}=-l(x)$. Finally, in the coordinate system defined by \begin{equation*} \left\{ \begin{array}{lll} w_1=l'(z_1)z_1 \\ w_2=\frac{z_2+z_3}{2} \\ w_3=\frac{i(z_2-z_3)}{2} \end{array}\right.,\end{equation*} the surface $X$ is locally given by $w_1^{d\alpha}+w_2^2+w_3^2=0$, hence $P$ is a singularity of type $A_{d\alpha-1}$.
\end{proof}

\begin{proposition}
Let $X$ be a surface in $\mathbb{P}^3$ of degree $d\geq 3$ defined by an equation of the form $$f\equiv X_2(X_3 L_3(X_0,X_1,X_2,X_3)+X_2 L_2(X_0,X_1,X_2)+L(X_0,X_1))+X_3^d=0,$$ where $L$ is a fixed and $L_2, L_3$ are general. Then each smooth point on the line $\Lambda$ defined by $X_2=X_3=0$ is a star point and each singular point $P=(a:b:0:0)$ on $\Lambda$ is of type $A_{k(P)}$, where $k(P)=d\alpha-1$ and $\alpha$ is the  multiplicity of the root $(a:b)$ of $L=0$. So we have that \begin{equation} \tag{$\star$} \label{eq A_k} \sum_{P\in\Lambda\cap\text{Sing}(X)} \frac{k(P)+1}{d}=d-1. \end{equation}
\end{proposition}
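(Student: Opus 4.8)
```latex
The plan is to prove the proposition in three stages: first verify that a general point of $\Lambda$ is a star point, then compute the singularity type at each singular point on $\Lambda$, and finally deduce the summation formula $(\star)$ as a consequence of these local computations together with a degree count. The first stage is essentially immediate from the machinery already developed: the given equation $f\equiv X_2(\ldots)+X_3^d$ has $X_3^d$ as the part not divisible by $X_2$, so $\Pi\cap X$ (where $\Pi$ is the hyperplane $X_2=0$) equals $d\Lambda$ as a divisor on $\Pi$. By Example \ref{ex general}, this shows every smooth point of $X$ on $\Lambda$ is a star point with common tangent space $\Pi$, so $\Lambda$ is (a component of the closure of) the star locus exactly as in Theorem \ref{thm main}.

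The second stage is the heart of the argument. The singular locus $\Sing(X)\cap\Lambda$ is cut out by $X_2=X_3=L(X_0,X_1)=0$, so the singular points on $\Lambda$ are precisely the roots of $L$, and a root $(a:b)$ of multiplicity $\alpha$ gives a singular point $P=(a:b:0:0)$. After moving $P$ to $(1:0:0:0)$ by a coordinate change fixing $\Lambda$ and $\Pi$, I would factor $L(X_0,X_1)=X_1^{\alpha}\tilde{L}(X_0,X_1)$ with $\tilde{L}(1,0)\neq 0$, reflecting that $(1:0)$ is a root of multiplicity exactly $\alpha$. The key claim is that, because $L_2$ and $L_3$ are general, the local analytic type at $P$ is governed only by the leading behaviour, and the situation reduces to the normal form treated in Lemma \ref{lem A_k}. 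Concretely, I would argue that the general choice of $L_3$ forces the coefficient of $X_3$ in the tangent cone to be nonzero (so that $L_3(0,0,0,1)\neq 0$ after normalisation, mirroring the hypothesis $X_3X_0^{d-2}$ in the Lemma), and that the $X_2L_2$ and higher-order terms can be absorbed by an analytic coordinate change exactly as in the chain of substitutions of that Lemma. This identifies $P$ as a singularity of type $A_{d\alpha-1}$, so that $k(P)=d\alpha-1$.

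The third stage is a bookkeeping step. Each root $(a:b)$ of $L$ of multiplicity $\alpha$ contributes the value $\alpha=(k(P)+1)/d$ to the left-hand side of $(\star)$, and summing the multiplicities of all roots of the binary form $L(X_0,X_1)$ yields $\deg L$. Since $f$ has degree $d$ and $L(X_0,X_1)$ appears as the coefficient of $X_2$ (making it a form of degree $d-1$ in $X_0,X_1$), we get $\sum_P \alpha = d-1$, which is exactly the asserted identity.

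The main obstacle I anticipate is the genericity reduction in the second stage: one must show that for general $L_2,L_3$ the local type at each singular point is precisely $A_{d\alpha-1}$ and does not degenerate (or improve) to a different type. The honest way to handle this is to exhibit that the hypotheses of Lemma \ref{lem A_k}, namely $L_3(0,0,0,1)\neq 0$ and $\tilde L(1,0)\neq 0$, hold on a dense open subset of the parameter space of $(L_2,L_3)$, and to verify that the intermediate terms in the tangent cone genuinely do not lower the singularity order. Once the reduction to the normal form $z_1^2+z_2^2+z_3^{d\alpha}$ is justified, the type $A_{d\alpha-1}$ follows directly from Lemma \ref{lem A_k}, and the remaining verification of $(\star)$ is routine.
```
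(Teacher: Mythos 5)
There is a genuine gap at the decisive step. Your stages one and three are fine (the part of $f$ on $X_2=0$ is $X_3^d$, so $\Pi\cap X=d\Lambda$ and Example \ref{ex general} applies; and $\deg L=d-1$ gives \eqref{eq A_k} once $k(P)=d\alpha-1$ is known), but stage two — the entire content of the proposition — is announced rather than proved, and you yourself flag it as "the main obstacle." The claim that the general case reduces "exactly as in the chain of substitutions" of Lemma \ref{lem A_k} is not correct as stated: that chain works because there $L_2\equiv 0$ and $L_3=X_0^{d-2}$, so locally $l_3\equiv 1$ and the substitution $y_3=x_3+x_1^{\alpha}l(x_1)$ cleanly isolates $x_3^d$. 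In the general case, after the analogous substitution $y_1=x_3l_3+x_2l_2+x_1^{\alpha}\tilde{l}$ (legitimate only when $l_3(0,0,0)=L_3(1,0,0,0)\neq 0$), the term $x_3^d$ becomes $u^d(y_1-x_2l_2-x_1^{\alpha}\tilde{l})^d$ with $u=1/l_3$ a non-constant unit, producing cross terms divisible by $y_1$ or $x_2$ that cannot be absorbed in a single step; one must iterate (or invoke the splitting lemma: solve $\partial f/\partial x_2=\partial f/\partial x_3=0$ for $x_2,x_3$ as series in $x_1$, finding $\mathrm{ord}(x_3)=\alpha$ and $\mathrm{ord}(x_2)=(d-1)\alpha$) and then verify that the residual one-variable function has order \emph{exactly} $d\alpha$: the absorbed terms have order at least $2(d-1)\alpha$, which exceeds $d\alpha$ precisely because $d\geq 3$. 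None of this bookkeeping appears in your proposal. Moreover, your stated genericity condition $L_3(0,0,0,1)\neq 0$ is the wrong one: the condition that makes the argument run is non-vanishing of $L_3$ \emph{at the singular point itself}, i.e.\ $L_3(a,b,0,0)\neq 0$ at each root $(a:b)$ of $L$ — indeed the lemma's own choice $L_3=X_0^{d-2}$ satisfies $L_3(0,0,0,1)=0$ while $L_3(1,0,0,0)=1$. (Once this is done correctly, genericity of $L_2$ turns out to be irrelevant to the singularity type.)

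It is also worth knowing that the paper takes a different route that avoids this computation entirely: it uses Lemma \ref{lem A_k} only to exhibit, for the fixed $L$, \emph{one} choice of $L_2,L_3$ realizing singularities of type $A_{d\alpha-1}$ at the roots of $L$, and then cites Looijenga and Lyashko to the effect that these are the mildest singularities that can deform into $d-1$ singularities of type $A_{d-1}$; combined with semicontinuity this yields the statement for general $L_2,L_3$ in one stroke. So you have two honest ways to close your gap: either carry out the local normal-form analysis sketched above (more elementary and self-contained, and it sharpens the genericity hypothesis), or replace your stage two by the specialization-plus-deformation argument of the paper. As written, your proposal does neither.
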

\begin{proof}
Lemma \ref{lem A_k} implies that for $L$ fixed, there exist polynomials $L_2$ and $L_3$ such that the singularities of $X$ on the line $\Lambda$ are all of the type $A_k$ and satisfy formula \eqref{eq A_k}. From \cite{Loo,Lya}, it follows that such singularities are the least worse that can deform into $d-1$ singularities of type $A_{d-1}$, hence the generic statement follows.
\end{proof}

The case of cubic surfaces is also investigated in \cite{Ngu4}. Since in that paper (because of moduli reasons) only semi-stable surfaces are considered, for cubic surfaces with infinitely many star points only the example of \cite{Cia} is obtained. In \cite{Ngu4}, the notion of star points is also introduced for singular points on a cubic surface. We can generalise this notion as follows.

\begin{definition}
Let $X$ be an irreducible hypersurface of degree $d\geq 3$ in $\mathbb{P}^N$. A point $P$ on $X$ is called a {\it star point} on $X$ if there exists a hyperplane $\Pi$ in $\mathbb{P}^N$ such that $\Pi\cap X$ (as a scheme) is a cone with vertex $P$.
\end{definition}

Of course, in the case of surfaces in $\mathbb{P}^3$, a point $P$ is a star point on $X$ if and only if there exists a plane $\Pi$ in $\mathbb{P}^3$ such that the reduced scheme associated to $\Pi\cap X$ is a union of lines through $P$. This corresponds to the definition of a star point in \cite[\S4]{Ngu4}. Note that for smooth points on hypersurfaces, the two definitions are equivalent.

In \cite[\S5]{Ngu4}, the notion of a proper star point is introduced. The meaning of this notion is not completely clear because it is defined with respect to a family, but in the situation of e.g. \cite[Proposition 5.3]{Ngu4} there is no family. Probably, what the author means is the following (we give the general definition).

\begin{definition}
A star point $P$ on $X$ is a {\it proper star point} if there exists a $1$-parameter family $(X(t),P(t))$ such that $X(0)=X$, $P(0)=P$ and $P(t)$ is a smooth star point on $X(t)$ for $t\neq 0$.
\end{definition}

\begin{lemma}
Each star point on a hypersurface of degree $d\geq 3$ is proper.
\end{lemma}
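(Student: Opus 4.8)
The plan is to reduce $P$ to a normal form and then write down an explicit one-parameter deformation that desingularizes $P$ while preserving the star-point property. Let $P$ be a star point on $X$ and fix a hyperplane $\Pi$ with $\Pi\cap X$ a cone of vertex $P$. If $P$ is already a smooth point of $X$, then (since the two notions of star point agree at smooth points) $P$ is a smooth star point and the constant family $(X,P)$ already witnesses properness; so I may assume $P\in\Sing(X)$. First I would choose coordinates $(X_0:\ldots:X_N)$ with $P=(1:0:\ldots:0)$ and $\Pi=\{X_1=0\}$, and write $f=X_1\,h+g$ with $g=f|_{X_1=0}$ independent of $X_1$. Since a degree-$d$ hypersurface in $\Pi$ (with coordinates $X_0,X_2,\ldots,X_N$) is a cone with vertex $(1:0:\ldots:0)$ exactly when its equation omits $X_0$, the cone condition on $\Pi\cap X=\{X_1=0,\ g=0\}$ forces $g=g(X_2,\ldots,X_N)$, so
$$f=X_1\,h(X_0,\ldots,X_N)+g(X_2,\ldots,X_N),$$
exactly the normal form of Example \ref{example coordinates}.

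Next I would record the smoothness criterion at $P$ by computing $\partial f/\partial X_i$ at $P$. Every partial vanishes at $P$ except $\partial f/\partial X_1(P)=h(1,0,\ldots,0)$, the coefficient of $X_0^{d-1}$ in $h$; in particular the partials of $g$ vanish at $P$ because $g$ is homogeneous of degree $d\geq 3$ in $X_2,\ldots,X_N$. Hence $P$ is smooth on $X$ iff $h(1,0,\ldots,0)\neq 0$, and our standing assumption $P\in\Sing(X)$ means $h(1,0,\ldots,0)=0$. I would then introduce the family $f_t=X_1\bigl(h+t\,X_0^{d-1}\bigr)+g(X_2,\ldots,X_N)$ and set $X(t)=\{f_t=0\}$, $P(t)=P$. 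For every $t$ we have $f_t|_{X_1=0}=g$, so $\Pi\cap X(t)$ is the same cone of vertex $P$ and $P$ remains a star point on $X(t)$; and replacing $h$ by $h+tX_0^{d-1}$ changes the coefficient of $X_0^{d-1}$ to $h(1,0,\ldots,0)+t=t$. By the criterion above, $P$ is smooth on $X(t)$ for every $t\neq 0$, its gradient at $P$ is $(0:t:0:\ldots:0)$ so $T_P(X(t))=\Pi$, and therefore $P$ is a smooth star point on $X(t)$. Since $f_0=f$, we also have $X(0)=X$.

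It then remains to check that $X(t)$ is an irreducible reduced hypersurface of degree $d$ for $t$ near $0$ with $t\neq 0$. The family $\{f_t\}$ has constant degree $d$, and both irreducibility and reducedness are open conditions on the space of degree-$d$ forms: the reducible members are the finite union of the (closed) images of the multiplication maps $(\text{forms of degree }a)\times(\text{forms of degree }d-a)\to(\text{forms of degree }d)$, and the non-reduced members form the closed non-squarefree locus. As $X(0)=X$ is irreducible and reduced, these conditions hold on a punctured neighbourhood of $t=0$, hence for all but finitely many $t$; for such small $t\neq 0$ the pair $(X(t),P)$ is an irreducible reduced hypersurface of degree $d$ with $P$ a smooth star point. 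This produces the required one-parameter family and shows $P$ is proper.

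I expect the only delicate point to be this last step: ruling out that the generic deformed fibre accidentally becomes reducible or acquires a multiple component. This is precisely where the openness of irreducibility and reducedness in the flat family $\{f_t\}$ is used; everything preceding it is a direct coordinate computation with the explicit deformation $h\mapsto h+tX_0^{d-1}$, which simultaneously fixes the intersection with $\Pi$ and switches on the missing $X_0^{d-1}$-coefficient that controls smoothness at $P$.
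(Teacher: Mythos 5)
Your proof is correct and takes essentially the same route as the paper: reduce to the normal form $X_1h(X_0,\ldots,X_N)+g(X_2,\ldots,X_N)$ of Example \ref{example coordinates} and deform linearly within that normal form, so that $\Pi\cap X(t)$ remains the fixed cone with vertex $P$ while the coefficient controlling smoothness at $P$ is switched on. The only difference is that the paper perturbs by general forms, $X_1(h+tH)+(g+tG)=0$, so the generic member is globally smooth (hence automatically irreducible and reduced), whereas your minimal perturbation $h\mapsto h+tX_0^{d-1}$ only guarantees smoothness at $P$ itself and therefore needs --- and correctly supplies --- the openness argument for irreducibility and reducedness of $X(t)$ near $t=0$.
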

\begin{proof}
Let $P$ be a star point on $X\subset \mathbb{P}^N$. Using coordinates on $\mathbb{P}^N$, we can assume $P=(1:0:\ldots:0)$ and $\Pi$ is the hyperplane in $\mathbb{P}^N$ with equation $X_1=0$ (such that $\Pi\cap X$ is a cone with vertex $P$). From Example \ref{example coordinates} follows that $X$ has equation $X_1 h(X_0,\ldots,X_N)+g(X_2,\ldots,X_N)=0$ with $h$ (resp. $g$) homogeneous of degree $d-1$ (resp. $d$). If we take general homogeneous forms $H$ of degree $d-1$ and $G$ of degree $d$, the surface with equation $X_1 H(X_0,\ldots,X_N)+G(X_2,\ldots,X_N)=0$ is smooth and $P$ is a star point on it. Now consider $X(t)$ to be the surface with equation $X_1(h+tH)+(g+tG)=0$.
\end{proof}

This proof is clearly much shorter than the proof in \cite[\S5]{Ngu4}. In that proof, the cases of $P$ being a singular point of type $A_2$, $P$ a star point on a line connecting two $A_1$-singularities and $P$ a star point on a line connecting two $A_2$-singularities are handled separately. For the first two cases, a blowing-up of $\mathbb{P}^2$ in $6$ points is used to obtain the result of the Lemma. In the third case, a sharper statement is obtained: a star point on a line connecting two $A_2$-singularities is the limit of a star point on a line connecting two $A_1$-singularities. We are going to generalise this result.

\begin{theorem}
Let $X$ be a surface of degree $d\geq 3$ in $\mathbb{P}^3$ and let $\Lambda\not\subset\text{Sing}(X)$ be a line containing $d-1$ singularities of type $A_{d-1}$. Let $P$ be a smooth star point of $X$ on $\Lambda$. Then there is a $1$-parameter family $(X(t),\Lambda(t),P(t))$ with $(X(0),\Lambda(0),P(0))=(X,\Lambda,P)$ such that for $t\neq 0$ the line $\Lambda(t)$ contains $d-1$ $A_{d-2}$-singularities of the surface $X(t)$ in $\mathbb{P}^3$ of degree $d$ and $P(t)$ is a smooth star point of $X(t)$ on $\Lambda(t)$.
\end{theorem}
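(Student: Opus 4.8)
The plan is to exhibit the family completely explicitly, keeping $\Lambda(t)=\Lambda$ and $P(t)=P$ fixed and letting only the defining equation vary (affinely) in $t$. First I would put everything in the normal form already used in this section. By Example \ref{example coordinates} (with $N=3$, $\lambda=1$) I may choose coordinates so that $\Lambda$ is $X_2=X_3=0$, the common tangent plane $\Pi=T_P(X)$ is $X_2=0$, and
$$f = X_2\,h(X_0,X_1,X_2,X_3)+X_3^d,\qquad h=X_3L_3+X_2L_2+L(X_0,X_1).$$
Here $\Sing(X)\cap\Lambda$ is cut out on $\Lambda$ by $L(X_0,X_1)=0$, and the hypothesis that $\Lambda$ carries exactly $d-1$ singularities of type $A_{d-1}$ forces the binary form $L$ of degree $d-1$ to have $d-1$ distinct simple roots $(a_i:b_i)$, which are precisely those $A_{d-1}$-points (cf. the computation preceding Proposition \ref{prop d-1 A_d-1}). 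Finally $P=(a:b:0:0)$ is a smooth point of $\Lambda$, so $L(a,b)\neq 0$ and $(a:b)\neq(a_i:b_i)$ for all $i$.

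The deformation I would use is
$$f_t = X_2\,h + X_3^{d-1}\bigl(X_3 - t\,M(X_0,X_1)\bigr),\qquad M=bX_0-aX_1,$$
so that $f_0=f$, each $X(t)=V(f_t)$ has degree $d$, and $\Lambda\subset X(t)$. The shape of the correction is dictated by the two demands it must meet at once. To turn each $A_{d-1}$ into an $A_{d-2}$ I want to drop the order of vanishing of $f$ along $X_3$ from $d$ to $d-1$ at each singular point; replacing $X_3^d$ by $X_3^{d-1}(X_3-tM)$ does exactly this, provided $M$ is nonzero at the point. A direct computation of the partial derivatives along $X_2=X_3=0$ shows (using $d\geq 3$) that $\Sing(X(t))\cap\Lambda$ is again cut out by $L=0$, so it is the same set of $d-1$ points. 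At $(a_i:b_i:0:0)$ the same local change of coordinates used before Proposition \ref{prop d-1 A_d-1} (available since the root is simple) brings $f_t$ to $y_1y_2+y_3^{d-1}(y_3-t\,m)$ with $m(0)\neq0$ because $M(a_i,b_i)\neq0$; for $t\neq 0$ the factor $y_3-tm$ is a unit, so the germ is $y_1y_2+(\text{unit})\,y_3^{d-1}$, an $A_{d-2}$-singularity. The condition $M(a_i,b_i)=ba_i-ab_i\neq 0$ needed here is automatic, since $(a_i:b_i)\neq(a:b)$.

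It remains to verify that $P$ is still a smooth star point of $X(t)$. Computing the gradient of $f_t$ at $P=(a:b:0:0)$ gives, for all $t$, the single nonzero partial $\partial_{X_2}f_t|_P=L(a,b)\neq 0$, so $P$ is smooth on $X(t)$ with tangent plane $\Pi:X_2=0$ independent of $t$. Restricting to $\Pi$ yields $f_t|_{X_2=0}=X_3^{d-1}\bigl(X_3-tM(X_0,X_1)\bigr)$, hence $\Pi\cap X(t)=(d-1)\Lambda+\Lambda'_t$ as divisors on $\Pi$, where $\Lambda'_t$ is the line $X_3=tM(X_0,X_1)$. Both lines pass through $P$: the first trivially, and $\Lambda'_t$ precisely because $M(a,b)=0$ by our choice $M=bX_0-aX_1$. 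Thus $\Pi\cap X(t)$ is a union of lines through $P$, i.e. a cone with vertex $P$, so $P=P(t)$ is a smooth star point of $X(t)$ on $\Lambda(t)=\Lambda$, as required.

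The genuinely delicate point, and the heart of the construction, is the simultaneous constraint on $M$: reducing each singularity to $A_{d-2}$ needs $M$ to be nonzero at the $d-1$ singular points, while keeping $P$ a star point needs the residual line $\Lambda'_t$ of $\Pi\cap X(t)$ to pass through $P$, i.e. $M(P)=0$. These look antagonistic, but they are compatible for the tautological reason that $P$ is distinct from the singular points, so the essentially unique linear form vanishing at $P$ is automatically nonzero at each $(a_i:b_i)$. Once this is isolated, everything else reduces to the local normal-form computation above. The only loose end is that $X(0)=X$ is irreducible and reduced; since integrality of the fibres is an open condition on $t$, the surface $X(t)$ is irreducible and reduced for all $t$ in a punctured neighbourhood of $0$, which is all that the statement requires.
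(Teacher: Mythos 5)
Your proposal is correct and follows essentially the same route as the paper: you construct the identical deformation $X_3^{d-1}(X_3 - tM)+X_2h$ with $M$ the linear form vanishing at $P$ (the paper writes $X_3^{d-1}(X_3+tg)$ with $g=\alpha_1X_0-\alpha_0X_1$, the same family up to the sign of $t$), keeping $\Lambda(t)=\Lambda$ and $P(t)=P$ fixed. The only difference is presentational: the paper deduces the $A_{d-2}$-singularities and the star point property at $P$ from its preliminary discussion of surfaces with $d-1$ $A_{d-2}$-singularities on a line, whereas you verify both directly by the local normal-form computation and the divisor decomposition $\Pi\cap X(t)=(d-1)\Lambda+\Lambda'_t$, which is a sound and equally valid check.
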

\begin{proof}
First we make a note on surfaces $X$ of degree $d$ in $\mathbb{P}^3$ containing a line $\Lambda\not\subset\text{Sing}(X)$ such that $\Lambda$ contains $d-1$ singularities of type $A_{d-2}$. In the proof of Proposition \ref{prop d-1 A_d-1}, instead of $m=d$, one obtains $m=d-1$, hence the equation of $X$ can be written as $$X_3^{d-1} g(X_0,X_1,X_3)+X_2 h(X_0,X_1,X_2,X_3)=0$$ and $g(X_0,X_1,0)\not\equiv 0$. The intersection of $X$ and the plane $\Pi$ with equation $X_2=0$ is equal to $(d-1)\Lambda+L$, where the line $L$ has equation $X_2=g(X_0,X_1,X_3)=0$. The set $\text{Sing}(X)\cap \Lambda$ on the line $\Lambda$ is given by $L(X_0,X_1)=0$. In case the zero of $g(X_0,X_1,0)$ on $\Lambda$ is also a zero of $L(X_0,X_1)$, the corresponding point is a $A_{d-1}$-singularity, hence $L$ intersects $\Lambda$ at a smooth point $P$ of $X$. Clearly, the point is a star point of $X$.

Now assume $X$ contains a line $\Lambda\not\subset\text{Sing}(X)$ containing $d-1$ singularities of type $A_{d-1}$. As in the proof of Proposition \ref{prop d-1 A_d-1}, using suited coordinates, the equation of $X$ can be written as $$X_3^d+X_2(X_3L_3+X_2L_2+L(X_0,X_1))=0.$$ The $d-1$ $A_{d-1}$-singularities on $\Lambda$ are the zeroes of $L(X_0,X_1)=0$. Now take a point $P=(\alpha_0:\alpha_1:0:0)$ on $\Lambda$ with $L(\alpha_0,\alpha_1)\neq 0$, so $P$ is smooth on $X$. Let $g(X_0,X_1):=\alpha_1X_0-\alpha_0X_1$ and consider the surface $X(t)$ with equation $$X_3^{d-1}(X_3+t g(X_0,X_1))+X_2(X_3L_3+X_2L_2+L(X_0,X_1))=0.$$ Then $\Lambda(t):=\Lambda$ is a line on $X(t)$ not contained in $\text{Sing}(X(t))$ and for $t\neq 0$ the surface $X(t)$ has exactly $d-1$ $A_{d-2}$-singularities on $\Lambda$. From the previous discussion, the point $P(t):=P$ is smooth on $X(t)$.
\end{proof}

Now consider a cubic surface $X$ in $\mathbb{P}^3$ and denote by $\mathcal{S}$ the closure of the union of all $1$-dimensional families of star points on $X$. From Proposition \ref{prop extremal} follows that there are only two possibilities: either $X$ is a cone on a cubic plane curve and then $\mathcal{S}$ is the union of lines through the vertex of $X$, either there is a plane $H$ and $\mathcal{S}$ is a union of lines in $H$. Consider the second case. We know each pair of $A_2$-singularities in $X$ give rise to a line inside $\mathcal{S}$, so all $A_2$-singularities are contained in one plane $H$. In case $X$ has at least three $A_2$-singularities, then $\mathcal{S}$ need to be the union of three different lines in $H$. If the intersection $P$ of two such lines would be a smooth point of $X$, since $X\cap H$ is singular at $P$, we would obtain $H=T_P(X)$. But $T_P(X)\cap X$ should be a cone (since $P$ is a star point), hence $P$ is not smooth on $X$. Therefore $\mathcal{S}$ are non-concurrent lines and the intersection points are $A_2$-singularities. This shows $X$ has at most three $A_2$-singularities. This is in accordance with known results, see e.g. \cite{Ngu1}.

\section*{Acknowledgements}

The authors like to thank Ciro Ciliberto for pointing us out reference \cite{Cia} and Alex Degtyarev for leading us to references \cite{Loo, Lya}. Both authors are partially supported by the project G.0318.06 of the Fund of Scientific Research - Flanders (FWO) and the first author is a Postdoctoral Fellow of FWO.

\end{document}